\numberwithin{equation}{section}
\newtheorem{theor}{Theorem}[section]
\newcounter{other}            
\def\B{\mathcal{B}}
\def\D{\mathbb{D}}
\def\C{\mathbb{C}}
\begin{document}

\title
{The Bloch space and the dual space of a Luecking-type subspace of $A^1$}
\author{Guanlong Bao and Fangqin Ye}
\address{Guanlong Bao\\
Department of Mathematics\\
    Shantou University\\
    Shantou, Guangdong 515063, China}
\email{glbao@stu.edu.cn}

\address{Fangqin Ye\\
Business School\\
    Shantou University\\
    Shantou, Guangdong 515063, China}
\email{fqye@stu.edu.cn}

\thanks{The work  was   supported  by NNSF of China (No. 11371234 and No. 11571217).}
\subjclass[2010]{30H30, 46E15}
\keywords{The Bloch space,  the dual space of a Luecking-type subspace of  $A^1$.}

\begin{abstract}
Let $X$ be the dual space of a Luecking-type subspace of the Bergman space $A^1$.
It is known that the Bloch space $\mathcal{B}$ is a subset of $X$. In 1990,  Ghatage and  Sun asked whether $\mathcal{B}$ is dense in $X$. They also asked whether the little version of $X$ is a subset of $\mathcal{B}$. In this note, based on results and methods  of  Girela,  Pel\'aez, P\'erez-Gonz\'alez and  R\"atty\"a in 2008, we answer the  two questions in the negative.

\end{abstract}

\maketitle

\section{Introduction}

Let $\D$ be the open unit disk in the complex plan $\C$. Denote by $H(\D)$ the space of analytic functions in $\D$. For $0< p<\infty$,
a function $f\in H(\D)$ belongs to the Bergman
space $A^p$ if
$$
\|f\|_{A^p}=\left(\int_\D |f(z)|^p dA(z)\right)^{\frac{1}{p}}<\infty,
$$
where $dA$ is the normalized area measure of $\D$.  Coifman and  Rochberg \cite{CR} established  atomic decomposition theorem for the Bergman space  $A^p$, $0<p<\infty$.
In \cite[p. 329]{Luecking}, Luecking proved that there exists a sequence $\{a_n\}_{n=1}^\infty\subseteq \D$ such that every $f\in A^p$, $p>1$, can be written in the form
$$
f(z)=\sum_{n=1}^\infty \frac{c_n (1-|a_n|)^{(1-1/p)+\eta}}{(1-\overline{a_n}z)^{1+1/p+\eta}}
$$
for an arbitrary $\eta>0$ and $\{c_n\}\in \ell^p$. As pointed out in \cite{Luecking},  the range of the parameter $\eta$ in the above formula is larger than the corresponding result   in \cite{CR}.
To determine the extent to which Luecking's decomposition of Bergman spaces $A^p$, $1<p<\infty$,  can be extended to $A^1$, Ghatage and Sun \cite{GS2} introduced  a Banach space of analytic functions, denoted by $Y$. More precisely, $Y$ is the completion (in the norm $\|.\|_Y$ defined later) of the set  of functions $f\in A^1$ with the form
$$
f(z)=\sum_{n=1}^\infty a_n k_{\lambda_n}(z),
$$
where $a_n\in \C$, $\lambda_n\in \D$ and
$$
\sum_{n=1}^\infty|a_n| \|k_{\lambda_n}\|_{A^1} <\infty.
$$
Here
$$
k_w(z)=\frac{1}{(1-\overline{w}z)^2}, \ w\in \D,
$$
is the  reproducing kernel of $A^2$. A norm of $Y$ is
$$
\|f\|_Y= \inf \sum_{n=1}^\infty|a_n| \|k_{\lambda_n}\|_{A^1},
$$
where the infimum is taken over the set of all such decompositions of $f$.  From \cite{GS2}, $Y$ is a proper subset of $A^1$ and  $Y$ is said to be  a Luecking-type subspace of  $A^1$.

 Ghatage and  Sun \cite{GS2} gave a series of interesting results of the space $Y$.  In particular, they described its dual and predual. Let $X$ be the space of functions $f\in H(\D)$ with
$$
\|f\|_X=\sup_{z\in \D} |f(z)|\|k_z\|^{-1}_{A^1}<\infty.
$$
Denote by $X_0$ the little version of $X$. Namely, the space $X_0$ consists of functions $f\in X$ such that
$$
\lim_{|z|\rightarrow 1}|f(z)|\|k_z\|^{-1}_{A^1}=0.
$$
It is known that $X_0$ is a closed subspace of $X$.  Ghatage and  Sun \cite{GS2} proved that the dual of $X_0$ can be identified with $Y$, and the dual of $Y$ can be
identified with $X$. Recall that the Bloch space $\B$ is the set of functions $f\in H(\D)$ for which
$$
\sup_{z\in \D} (1-|z|^2)|f'(z)|<\infty.
$$
The little Bloch space $\B_0$ consists of those functions $f\in \B$ with
$$
\lim_{|z|\rightarrow 1} (1-|z|^2)|f'(z)|=0.
$$
By \cite{GS2},  $\B \subsetneqq X$ and $\B_0 \subsetneqq X_0$. See \cite{CEGY, Choe-Lee, GS1, Lee} for the further study associated with   $X$ or $Y$.

In \cite[p. 771]{GS2},  Ghatage and  Sun asked whether $\mathcal{B}$ is dense in $X$.
In \cite[p. 773]{GS2}, they asked whether $X_0$ is a subset of $\B$.
In this note,  we answer  the  two questions  in the negative.

\section{The Bloch space and the space $X$}

The section is devoted to  answer the two questions  of   Ghatage and Sun. By results and methods of  Girela,   Pel\'aez,  P\'erez-Gonz\'alez and  R\"atty\"a \cite{GPPR}, the  proof given here   is elementary.

Denote by  $ H_{\log}^\infty$  the Banach space of functions $f\in H(\D)$ satisfying
$$
\|f\|_{H_{\log}^\infty}=\sup_{z\in \D}\frac{|f(z)|}{\log\frac{e}{1-|z|}}<\infty.
$$
See \cite{GPPR} for the study of $ H_{\log}^\infty$.
From  \cite{GS1} or  \cite{GS2}, we know that
\begin{equation*}
\|k_{w}\|_{A^1}=
\begin{cases}
1, \ \ w=0,\\
|w|^{-2} \log \frac{1}{(1-|w|^2)} , \ w\neq 0.
\end{cases}
\end{equation*}
Note that
$$
\lim_{|w|\rightarrow 0} |w|^{-2} \log \frac{1}{(1-|w|^2)} =1.
$$
Then it is easy to see that $X=H_{\log}^\infty$. Hence  another  norm of $X$ can be  defined by
$$
|||f|||_X=\sup_{z\in \D}\frac{|f(z)|}{\log\frac{e}{1-|z|}}, \ \ f\in X.
$$
 Also,  $X_0$ is the set of functions $f\in X$ with
 $$
 \lim_{|z|\rightarrow 1}  \frac{|f(z)|}{\log\frac{e}{1-|z|}} =0.
 $$

From \cite{GS2}, $\B_0$ is densely contained in $X_0$.  But the following result shows that  the case for $\B$ and $X$ is different.
\begin{theor}\label{qestion 1}
The Bloch space is not dense in the space $X$.
\end{theor}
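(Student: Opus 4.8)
The plan is to exhibit a single function $g\in X$ whose distance to $\B$ in the norm of $X$ is bounded below by a positive absolute constant; since $\B$ is a linear subspace of $X$, this immediately shows $\B$ is not dense in $X$, and because $\|\cdot\|_X$ and $|||\cdot|||_X$ are equivalent it suffices to bound $\inf_{h\in\B}|||g-h|||_X$ away from $0$. The only property of $\B$ that enters is an elementary coefficient estimate: \emph{if $h\in\B$, with Taylor coefficients $\widehat h(n)$, then $|\widehat h(n)|\le e\,\|h\|_\B$ for every $n\ge1$.} This comes from Cauchy's estimate: $n\widehat h(n)$ is the $(n-1)$-st Taylor coefficient of $h'$, and $|h'(z)|\le\|h\|_\B/(1-|z|)$, so $n\,|\widehat h(n)|\le\|h\|_\B\,r^{-(n-1)}(1-r)^{-1}$ for all $0<r<1$; taking $r=1-1/n$ gives the claim since $(1-1/n)^{n-1}\ge e^{-1}$.

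Following the ideas of \cite{GPPR}, for the function $g$ I would take a Hadamard gap series whose coefficients grow precisely as fast as membership in $H^\infty_{\log}$ permits. Setting $E_k=2^{2^k}$, let
$$
g(z)=\sum_{k=1}^\infty 2^{k}\,z^{E_k}.
$$
The first step is to check that $g\in X=H^\infty_{\log}$. Given $|z|=\rho$ (which we may take close to $1$), let $k_0$ be the largest index with $E_{k_0}\le(1-\rho)^{-1}$. The partial sum over $k\le k_0$ is at most $\sum_{k\le k_0}2^k<2^{k_0+1}$, and since $2^{k_0}\log 2=\log E_{k_0}\le\log\frac{e}{1-\rho}$ this is $\lesssim\log\frac{e}{1-\rho}$. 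For $k>k_0$ one has $\rho^{E_k}\le e^{-(1-\rho)E_k}$ and $(1-\rho)E_k\ge E_k/E_{k_0+1}=2^{2^k-2^{k_0+1}}$, so the term $k=k_0+1$ contributes $\le 2^{k_0+1}e^{-1}\lesssim\log\frac{e}{1-\rho}$, while the remaining terms sum to $O(1)$ by the superexponential decay. Hence $|g(z)|\le\sum_k 2^k\rho^{E_k}\lesssim\log\frac{e}{1-\rho}$, i.e.\ $g\in X$.

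The second step is the distance estimate, which is the crux. Fix $h\in\B$ and set $f=g-h$, so that $\widehat f(E_k)=2^{k}-\widehat h(E_k)$ and, by the coefficient estimate, $|\widehat f(E_k)|\ge 2^{k}-e\|h\|_\B$. On the other hand, for any $n\ge1$ and $0<\rho<1$,
$$
|\widehat f(n)|\,\rho^{n}=\left|\frac{1}{2\pi}\int_0^{2\pi}f(\rho e^{i\theta})e^{-in\theta}\,d\theta\right|\le\sup_{|z|=\rho}|f(z)|\le|||f|||_X\,\log\frac{e}{1-\rho}.
$$
Taking $n=E_k$ and $\rho=\rho_k:=1-E_k^{-1}$, and using $\rho_k^{E_k}=(1-E_k^{-1})^{E_k}\ge\frac14$ and $\log\frac{e}{1-\rho_k}=1+2^{k}\log 2$, we obtain
$$
|||g-h|||_X\ \ge\ \frac{(2^{k}-e\|h\|_\B)\,\rho_k^{E_k}}{1+2^{k}\log 2}\ \ge\ \frac{2^{k}-e\|h\|_\B}{4\,(1+2^{k}\log 2)} ,
$$
and the right-hand side increases to $(4\log 2)^{-1}$ as $k\to\infty$. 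Hence $|||g-h|||_X\ge(4\log 2)^{-1}$ for every $h\in\B$, so $\operatorname{dist}_X(g,\B)\ge(4\log 2)^{-1}>0$ and $\B$ is not dense in $X$.

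The one step that I expect to demand real care is the verification $g\in X$, which depends on a delicate calibration of the example: the exponents $E_k$ have to grow fast enough that $g$ has only logarithmic growth --- equivalently $\log E_k\asymp 2^k$, and it is this that forces the extreme lacunarity needed to absorb the tail --- yet slowly enough that the quotient $2^{k}/\log(eE_k)$ stays bounded away from $0$. The choice $E_k=2^{2^k}$ with $\widehat g(E_k)=2^{k}$ threads this needle, and then the Bloch coefficient estimate does the rest.
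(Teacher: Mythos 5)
Your argument is correct, but it is a genuinely different proof from the one in the paper. The paper argues by contradiction from the existence (via \cite[Theorem 1.2]{GPPR}) of two functions $f_1,f_2\in X$ with $|f_1(z)|+|f_2(z)|\ge\log\frac{1}{1-|z|}$: approximating both by Bloch functions would force a Bloch function whose integral means grow like $\log\frac{1}{1-r}$, contradicting the Clunie--MacGregor/Makarov bound $O\bigl((\log\frac{1}{1-r})^{1/2}\bigr)$. You instead exhibit one explicit lacunary series $g(z)=\sum_k 2^k z^{2^{2^k}}$ in $X$ and show $\operatorname{dist}_X(g,\B)\ge(4\log 2)^{-1}$ by pairing the Cauchy coefficient estimate $|\widehat f(E_k)|\rho^{E_k}\le |||f|||_X\log\frac{e}{1-\rho}$ against the boundedness of Taylor coefficients of Bloch functions. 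All the steps check out: the coefficient bound $|\widehat h(n)|\le e\|h\|_{\B}$ is the standard one (and only uses $n\ge 1$, so the Bloch seminorm suffices); your hands-on verification that $g\in H^\infty_{\log}$ is sound and is in any case the content of the estimate $\sum_k(\log n_k)r^{n_k}\le C\log\frac{e}{1-r}$ from \cite[p.~536]{GPPR} that the paper itself invokes later; and passing from the per-$k$ lower bound to its supremum $(4\log 2)^{-1}$ is legitimate. What your route buys is that it is more elementary and quantitative --- it avoids the nontrivial two-function construction of \cite{GPPR} and the integral-means growth theorem, and it names a concrete function at a definite positive distance from $\B$. It is, in effect, a sharpened version of the argument the paper uses for its Theorem 2.2 (where the same gap-series/coefficient-boundedness pairing shows $X_0\nsubseteq\B$), so your single example simultaneously witnesses both of Ghatage and Sun's questions. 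What the paper's route buys is portability: as its Remark 2 notes, the two-function and integral-means results have known analogues on the unit ball of $\C^n$, so that proof transfers verbatim to higher dimensions, where a one-variable lacunary series does not.
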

\begin{proof}  By \cite[Theorem 1.2]{GPPR}, there exist two functions $f_1$, $f_2\in X$ such that
\begin{equation} \label{f1 f2}
|f_1(z)|+|f_2(z)|\geq \log\frac{1}{1-|z|}, \ z\in \D.
\end{equation}
Suppose  $\B$ is dense in $X$. Then  there are two functions $g_1$, $g_2\in \B$ satisfying
$$
|||f_i-g_i|||_X=\sup_{z\in \D}\frac{|f_i(z)-g_i(z)|}{\log\frac{e}{1-|z|}}<\frac{1}{3}, \ \ i=1, 2.
$$
Hence,
$$
|f_i(w)-g_i(w)|\leq \frac{1}{3} \log\frac{e}{1-|w|},     \ \ i=1, 2,
$$
for all $w\in \D$. Consequently,
$$
|g_i(w)|\geq |f_i(w)|-\frac{1}{3} \log\frac{e}{1-|w|},\ \ w\in \D, \ \ \ \ i=1, 2. $$
Combining this with (\ref{f1 f2}), one gets
$$
|g_1(w)|+|g_2(w)|\geq  \log\frac{1}{1-|w|}-\frac{2}{3} \log\frac{e}{1-|w|},\ \ w\in \D. $$
Thus,
 $$
 |g_1(w)|+|g_2(w)|\geq  \frac{1}{12} \log\frac{e}{1-|w|},\ \  1-e^{-3} <|w|<1,
 $$
which will produce  a contradiction (see \cite[p. 514]{GPPR}). In fact, for $1-e^{-3}<r<1$, we have
$$
\frac{1}{2\pi} \int_0^{2\pi} \left(|g_1(re^{i\theta})|+|g_2(re^{i\theta})|\right) d\theta \geq
\frac{1}{12} \log\frac{e}{1-r}.
$$
Thus,
 $$
 \frac{1}{2\pi} \int_0^{2\pi} |g_j(re^{i\theta})| d\theta \geq     \frac{1}{ 24} \log\frac{e}{1-r}, $$
  where $j$ is equal to   1 or 2.    This gives
   \begin{equation}\label{a Bloch function}
 \varlimsup_{r\rightarrow1} \frac{ \frac{1}{2\pi} \int_0^{2\pi} |g_j(re^{i\theta})| d\theta} {\left(\log\frac{1}{1-r}\right)^{\frac{1}{2} }}=\infty.
 \end{equation}
But a result of Clunie and MacGregor \cite{CM} or  Makarov \cite{Ma} asserts that
\begin{equation} \label{growth of Bloch}
 \left(\frac{1}{2\pi} \int_0^{2\pi} |h(re^{i\theta})|^p d\theta\right )^{\frac{1}{p}}=O\left(\left(\log\frac{1}{1-r}\right)^{\frac{1}{2}}\right),
   \ \ \mbox{as}\ \  r\rightarrow1,
  \end{equation}
for all $0<p<\infty$  and $h\in \B$. We see that condition (\ref{a Bloch function}) contradicts condition (\ref{growth of Bloch}) with $p=1$.   The proof is complete.
\end{proof}

\vspace{0.1truecm}
\noindent{\bf Remark 1}.\quad
The first construction of the same fashion as condition (2.1) was given by Ramey and Ullrich \cite{RU}. More precisely,  Ramey and Ullrich proved that there exist $f$, $g\in \B$ such that
$$
|f'(z)|+|g'(z)|\geq \frac{1}{1-|z|}
$$
for all $z\in \D$. In 2011, Kwon and  Pavlovi\'c \cite{KP} generalized Ramey and Ullrich's result  by considering a wide class of weights.
We refer to \cite{AD, GPR, Lou} for more results related to these   constructions.

\vspace{0.1truecm}
\noindent{\bf Remark 2}.\quad
  Choe and   Lee \cite[p. 162]{Choe-Lee} also  posed a question that is $\B$  dense in $X$ for the corresponding case in the unit ball of the complex space $\C^n$?  Note that the  results in the unit ball of $\C^n$  corresponding to conditions (\ref{f1 f2}) and (\ref{growth of Bloch})  can be found in
\cite[p. 400]{AD} and \cite[p. 2808]{D1} respectively. The same arguments as the proof of Theorem \ref{qestion 1}  yield that the answer to   Choe and
 Lee's question is also negative.

   Girela,   Pel\'aez,  P\'erez-Gonz\'alez and  R\"atty\"a \cite[Theorem 8.1]{GPPR}
characterized certain lacunary series in $H_{\log}^\infty$ (see \cite[Theorem 14]{PR} for a more general result).  We get the   corresponding result   of $X_0$ as follows. As an application,  we show that $X_0\nsubseteq \B$.
\begin{theor}\label{qestion 2}
 Let $f\in H(\D)$ with the power series expansion $f(z)=\sum_{k=0}^\infty a_k z^{n_k}$  and suppose  there exist $\alpha>1$ and $\beta>1$  such that $n_k^\alpha \leq n_{k+1} \leq n_k^{\alpha\beta}$ for all $k$. Then $f\in X_0$  if and only if
      \begin{equation}  \label{X_0 series}
      \lim_{k\rightarrow \infty}    \frac{|a_k|}{\log n_k} =0.
      \end{equation} Furthermore,   the space $X_0$ is not a subset of the Bloch space.
\end{theor}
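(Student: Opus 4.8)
The plan is to prove the equivalence first and then deduce $X_0\nsubseteq\B$ from it together with the Clunie--MacGregor/Makarov estimate (\ref{growth of Bloch}). For the \emph{necessity} of (\ref{X_0 series}) the gap hypothesis plays no role. If $f\in X_0$, then $\psi(r):=\sup_{|z|\ge r}\frac{|f(z)|}{\log\frac{e}{1-|z|}}\to0$ as $r\to1$, and the Cauchy coefficient formula gives, for every $r\in(0,1)$,
$$
|a_k|\le r^{-n_k}\sup_{|z|=r}|f(z)|\le r^{-n_k}\,\psi(r)\,\log\tfrac{e}{1-r}.
$$
Choosing $r=1-1/n_k$ (legitimate once $n_k\ge 3$), so that $r^{-n_k}\to e$ and $\log\frac{e}{1-r}=\log(en_k)\le 2\log n_k$ for large $k$, yields $|a_k|/\log n_k\le C\,\psi(1-1/n_k)\to0$, which is (\ref{X_0 series}).

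For the \emph{sufficiency} I would isolate the elementary estimate
$$
\sum_{k}(\log n_k)\,r^{n_k}\le C_{\alpha,\beta}\,\log\frac{e}{1-r},\qquad 0\le r<1,
$$
which is the mechanism behind the sufficiency half of \cite[Theorem 8.1]{GPPR}; it is exactly here that both sides of $n_k^\alpha\le n_{k+1}\le n_k^{\alpha\beta}$ are used, the left inequality forcing $\log n_k$ to grow geometrically in $k$ (so the series is essentially a geometric progression) and the right inequality guaranteeing that for $r$ near $1$ this progression is comparable to its largest non-negligible term, which is of order $\log\frac{e}{1-r}$. Granting it, I split $f$ at level $N$ and bound, for all $z\in\D$,
$$
\frac{|f(z)|}{\log\frac{e}{1-|z|}}\le\frac{\sum_{k\le N}|a_k|}{\log\frac{e}{1-|z|}}+C_{\alpha,\beta}\,\sup_{k>N}\frac{|a_k|}{\log n_k}.
$$
Given $\varepsilon>0$, one first fixes $N$ by (\ref{X_0 series}) so that the second term is $<\varepsilon$, then lets $|z|\to1$ so the first term vanishes; this shows $\varlimsup_{|z|\to1}|f(z)|/\log\frac{e}{1-|z|}\le\varepsilon$ for every $\varepsilon$, i.e. $f\in X_0$ (and in particular $f\in X=H_{\log}^\infty$).

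To see that $X_0$ is not contained in $\B$ I would test the equivalence on an explicit lacunary series. Take $n_k=2^{2^k}$, so that $n_{k+1}=n_k^2$ and the gap hypothesis holds with $\alpha=\beta=2$, and put $a_k=(\log n_k)^{3/4}$. Then $|a_k|/\log n_k=(\log n_k)^{-1/4}\to0$, so by the equivalence $f(z)=\sum_k a_k z^{n_k}\in X_0$, while $|a_k|^2/\log n_k=(\log n_k)^{1/2}\to\infty$. If $f$ were in $\B$, then (\ref{growth of Bloch}) with $p=2$ would give
$$
\sum_k|a_k|^2 r^{2n_k}=\frac{1}{2\pi}\int_0^{2\pi}|f(re^{i\theta})|^2\,d\theta=O\Big(\log\tfrac{1}{1-r}\Big),\qquad r\to1;
$$
evaluating the left-hand side at $r=1-1/n_m$ and keeping only the $k=m$ term gives $\sum_k|a_k|^2 r^{2n_k}\ge|a_m|^2(1-1/n_m)^{2n_m}\ge c\,|a_m|^2$ for large $m$, whence $|a_m|^2=O(\log n_m)$, contradicting $|a_m|^2/\log n_m\to\infty$. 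Hence $f\in X_0\setminus\B$.

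The Cauchy estimate, the triangle-inequality splitting, and the final contradiction stay close to the arguments already used for Theorem \ref{qestion 1}. The one genuinely non-formal ingredient is the estimate $\sum_k(\log n_k)r^{n_k}\le C_{\alpha,\beta}\log\frac{e}{1-r}$ and, through it, the uniform control of the $X$-norm of a lacunary tail; this is where \cite[Theorem 8.1]{GPPR} and the precise two-sided form of the gap condition are indispensable, and it is the step I expect to demand the most care.
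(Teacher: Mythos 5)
Your proof of the equivalence is correct and follows essentially the same route as the paper: the necessity is the Cauchy coefficient estimate at radius $1-1/n_k$ (where, as you note, the gap hypothesis is not used), and the sufficiency rests on the same key lemma $\sum_k(\log n_k)r^{n_k}\leq C_{\alpha,\beta}\log\frac{e}{1-r}$, which the paper likewise imports from \cite[p.~536]{GPPR} rather than reproving, followed by the same tail/head splitting. Where you genuinely diverge is in the final step, showing $X_0\nsubseteq\B$. The paper takes $h(z)=\sum_k k\,z^{2^{2^k}}$, checks $k/\log n_k\to 0$ so that $h\in X_0$ by the equivalence just proved, and then disposes of $h\notin\B$ in one line by citing the fact (from \cite{ACP}) that Bloch functions have bounded Taylor coefficients. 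You instead take $a_k=(\log n_k)^{3/4}$ and rule out membership in $\B$ by combining the integral-means estimate (\ref{growth of Bloch}) with $p=2$, Parseval's identity, and an evaluation at $r=1-1/n_m$; this is correct, and it has the aesthetic advantage of reusing the same growth estimate that drives the proof of Theorem \ref{qestion 1}, but it is more work than necessary --- since your coefficients $(\log n_k)^{3/4}$ are unbounded, the bounded-coefficient criterion would have finished your example just as quickly. The only place where your write-up is slightly less anchored than the paper's is the key lemma for sufficiency, which you justify heuristically ("the mechanism behind \cite[Theorem 8.1]{GPPR}") rather than by a precise citation; you correctly flag this as the one non-formal ingredient, and the paper does no more than cite it either, so this is not a gap.
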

 \begin{proof}  Let $f\in X_0$. Then
  for any $\varepsilon>0$, there exists a $\delta>0$, such that  $|f(z)|<\varepsilon \log\frac{e}{1-|z|}$ for $\delta<|z|<1$.
  For this $\delta$, there exists a positive integer  $N$, such that if $k>N$, then $1-\frac{1}{n_k}>\delta$. By Cauchy's integral formula,     one gets
  $$
  |a_k|=\frac{|f^{(n_k)}(0)|}{n_k!}\leq \frac{1}{2\pi}  \int_{|z|=1-\frac{1}{n_k}} \frac{|f(w)|}{|w|^{n_k+1}} |dw|
  \leq  \varepsilon \log(en_k)  \left(1-\frac{1}{n_k}\right)^{-n_k} \leq 2 \varepsilon e \log(en_k).
  $$
  Thus condition (\ref{X_0 series})  holds.

  On the other hand, suppose    condition (\ref{X_0 series}) is true. Then  for any $\varepsilon>0$,  there exists a positive integer  $K$, such that if $k>K$, then $|a_k|<\varepsilon \log n_k$.    There also exists a positive constant $\eta$ such that if $\eta<|z|<1$, then
  $$
  \frac{\sum_{k=0}^K |a_k|}{\log \frac{e}{1-|z|}}<\varepsilon.
  $$
  It is clear that     condition (\ref{X_0 series}) implies
  $$
 \sup_{k} \frac{|a_k|}{\log n_k} <\infty.
  $$
Hence, by   Girela,   Pel\'aez,  P\'erez-Gonz\'alez and  R\"atty\"a \cite[p. 536]{GPPR}, we see that   there exists a positive constant $C$ depending only on $\alpha$
and $\beta$, such that
$$
\sum_{k=1}^\infty (\log n_k) r^{n_k} \leq C \log\frac{e}{1-r}, \ \ 0\leq r<1. $$
Therefore, for    $\eta<|z|<1$, we obtain
$$
\frac{|f(z)|}{\log\frac{e}{1-|z|}} \leq    \frac{{\sum_{k=0}^K |a_k|}+\varepsilon \sum_{k=K+1}^\infty (\log n_k)|z|^{n_k}}{\log\frac{e}{1-|z|}}
\leq (1+C) \varepsilon,
$$
 which gives
 $$
 \lim_{|z|\rightarrow 1}   \frac{|f(z)|}{\log\frac{e}{1-|z|}}=0. $$
 Namely  $f\in X_0$.   Consequently, $f\in X_0$ if and only if condition (\ref{X_0 series}) holds.

  Let $h(z)=\sum_{k=1}^\infty k z^{2^{2^k}}$. Take  $\alpha =2$  and $\beta =3/2$. Then
  $(2^{2^{k}})^\alpha=2^{2^{k+1}}< (2^{2^{k}})^{\alpha \beta}$ and
  $$
  \lim_{k\rightarrow \infty} \frac{k}{2^k \log 2}=0. $$
  Thus $h\in X_0$. It is well known (cf. \cite{ACP}) that if $g(z)=\sum_{n=0}^\infty b_n z^n \in \B$,
  then the sequence $\{b_n\}_{n=0}^\infty$ is bounded. Hence $h\not\in \B$. In other words,  the space $X_0$ is not a subset of the Bloch space.
  The proof is complete.
\end{proof}

\bigskip
\noindent
{\bf Acknowledgements}
\bigskip

The authors thank the two anonymous referees very much for their corrections and helpful suggestions.

\end{document}